\def\calL{\mathcal{L}}
\def\eqdef{:=}
\def\hcross{{h^\times}}
\def\hcrossent#1#2{{\hcross}\left(#1:#2\right)}
\def\dmu{\mathrm{d}\mu}
\def\bbH{\mathbb{H}}
\def\dx{\mathrm{d}x}
\def\dy{\mathrm{d}y}
\def\KL{\mathrm{KL}}
\def\bbR{{\mathbb{R}}}
\def\calF{\mathcal{F}}
\def\calX{\mathcal{X}}
\def\calY{\mathcal{Y}}
\def\st{\ :\ }
\newtheorem{theorem}{Theorem}
\newtheorem{property}{Property}
\newtheorem{proposition}{Proposition}
\newtheorem{corollary}{Corollary}
\title{On the Kullback-Leibler divergence between location-scale densities}
\author{Frank Nielsen\\ Sony Computer Science Laboratories Inc, Japan}
\date{}
\begin{document}
\maketitle

\begin{abstract}
We show that the $f$-divergence between any two densities of potentially different location-scale families can be reduced to the calculation of the $f$-divergence between one standard density with another location-scale density.
It follows that the $f$-divergence between two scale densities depends only on the scale ratio.
We then report conditions on the standard distribution to get symmetric $f$-divergences:
First, we prove that all $f$-divergences between densities of a location family are symmetric whenever the standard density is even, and second, we illustrate a generic symmetric property with the calculation of the Kullback-Leibler divergence between scale Cauchy distributions.
Finally, we show that the  minimum $f$-divergence of any query density of a location-scale family  to
 another location-scale family is independent of the query location-scale parameters.
\end{abstract}

\noindent Keywords: Location-scale family, Kullback-Leibler divergence, location-scale group, Cauchy distributions.

\section{Introduction}
%%%%%

Let $X\sim p$ be a random variable with cumulative distribution function $F_X$ and probability density $p_X(x)$ on the support $\calX$ (usually $\calX=\bbR$ or $\calX=\bbR_{++}$).
A location-scale random variable $Y=l+sX$ (for location parameter $l\in\calX$ and scale parameter $s>0$)  has distribution $F_Y(y)=F_X(\frac{x-l}{s})$ and density $p_Y(y)=p_X(\frac{x-l}{s})$. 
The location-scale  group~\cite{Murray-1993} $\bbH=\{(l,s) \ :\ l\in\bbR\times\bbR_{++}\}$ acts on the densities of a location-scale family~\cite{Murray-1993}:
The identity element is $i=(0,1)$, the group operation $e_1.e_2$ yields $e_1.e_2=(l_1+s_1l_2,s_1s_2)$ for $e_1=(l_1,s_1)$ and $e_2=(l_2,s_2)$, 
and the inverse element $e^{-1}$ is $e^{-1}=(-\frac{l}{s},\frac{1}{s})$ for $e=(l,s)$.

Consider two location-scale families~\cite{Murray-1993} sharing the same support $\calX$:
$$
\calF_1=  \left\{ p_{l_1,s_1}(x)= \frac{1}{s_1} p\left(\frac{x-l_1}{s_1}\right) \st (l_1,s_1)\in \bbH \right\},
$$
and 
$$
\calF_2=  \left\{ q_{l_2,s_2}(x)= \frac{1}{s_2} q\left(\frac{x-l_2}{s_2}\right) \st (l_2,s_2)\in \bbH \right\},
$$
where $p(x)=p_{0,1}(x)$ and $q(x)=q_{0,1}(x)$ denote the {\em standard densities} of $\calF_1$ and $\calF_2$, respectively (also called reduced distributions~\cite{KL-locationscale-2016}).

A {\em location family} is a subfamily of a location-scale family, with fixed scale $s_0$.
We denote by $p_l=p_{l,s_0}$ the density of a location family.
Similarly, a {\em scale family} is a subfamily of a location-scale family with prescribed location $l_0$.
We denote by $p_s=p_{l_0,s}$ the density of a scale family.

For example, $\calF_1$ can be the {\em Cauchy family}~\cite{KLCauchy-2019} with standard distribution $p(x)=\frac{1}{\pi (1+x^2)}$ and $\calF_2$ the {\em normal family} with
standard distribution $q(x)=\frac{1}{\sqrt{2\pi}}\exp(-\frac{x^2}{2})$, both families with support $\calX=(-\infty,\infty)$.
Then with our notations, $p_s=p_{0,s}$ is the Cauchy scale family and $q_l=q_{l,1}$ is the location normal family with unit variance.
Another example, is the family $\calF_1$ of half-normal distributions with the family $\calF_2$ of exponential distributions, both defined on the support 
$\calX=[0,\infty)$.

The {\em cross-entropy}~\cite{CT-2012,HcrossEF-2010} $\hcrossent{p_{l_1,s_1}}{q_{l_2,s_2}}$ between a density $p_{l_1,s_1}$ of $\calF_1$ and a density $q_{l_2,s_2}$ of $\calF_2$ is defined by
\begin{equation}\label{eq:htimes}
\hcrossent{p_{l_1,s_1}}{q_{l_2,s_2}} = -\int_{\calX} p_{l_1,s_1}(x)\log q_{l_2,s_2}(x) \dx.
\end{equation}

The {\em differential entropy}~\cite{CT-2012} $h$ is the self cross-entropy:
\begin{equation}\label{eq:h}
h(p_{l,s})=\hcrossent{p_{l,s}}{p_{l,s}}.
\end{equation}

The Kullback-Leibler (KL) divergence is the difference between the cross-entropy and the entropy:
\begin{eqnarray}
\KL(p_{l_1,s_1}:q_{l_2,s_2}) &=&  h^\times(p_{l_1,s_1}:q_{l_2,s_2}) - h(p_{l_1,s_1}) = 
\int_{\calX} p_{l_1,s_1}(x)\log \frac{p_{l_1,s_1}(x)}{q_{l_2,s_2}(x)} \dx \geq 0.
\end{eqnarray}

Note that the KL divergence between a standard Cauchy distribution and a standard Gaussian distribution is {\em infinite} since the integral diverges but the KL divergence between a standard Gaussian distribution and a standard Cauchy distribution is finite.
Thus the KL divergence between any two arbitrary location-scale families may potentially be infinite and may not admit a closed-form formula using the parameters $(l_1,s_1;l_2,s_2)$.

By making some changes of variable for $x$ in the cross-entropy integral on the right-hand-side of Eq.~\ref{eq:htimes}, we  establish 
the following four basic identities:

\begin{description}

\item[Left scale multiplication.]
\begin{equation}
\hcrossent{p_{l_1,\lambda_1 s_1}}{q_{l_2, s_2}} =   \hcrossent{p_{\frac{l_1}{\lambda_1},s_1}}{q_{\frac{l_2}{\lambda_1},\frac{s_2}{\lambda_1}}} +\log\lambda_1,\quad \forall \lambda_1\in\bbR_{++}.
\end{equation}

\begin{proof}
Make a change of variable in the integral with $y=\frac{x}{\lambda_1}$ for $\lambda_1>0$ (or $x=\lambda_1y$) and $\dx=\lambda_1 \dy$.
Then we have
\begin{eqnarray}
\hcrossent{p_{l_1,\lambda_1 s_1}}{q_{l_2, s_2}} &=&
-\int \frac{1}{\lambda_1 s_1} p\left(\frac{x-l_1}{\lambda_1 s_1}\right)  \log \frac{1}{s_2} q\left(\frac{x-l_2}{s_2}\right) \dx,\\
&=&  -\int \frac{1}{s_1} p\left(\frac{y-\frac{l_1}{\lambda_1}}{s_1}\right)  \log \frac{\lambda_1}{s_2}\frac{1}{\lambda_1} 
q\left(\frac{y-\frac{l_2}{\lambda_1}}{\frac{s_2}{\lambda_1}}\right) \dy,\\
&=& -\int p_{\frac{l_1}{\lambda_1},s_1}(y) \log \frac{1}{s_2}\frac{\lambda_1}{\lambda_1} 
q\left(\frac{y-\frac{l_2}{\lambda_1}}{\frac{s_2}{\lambda_1}}\right) \dy + \log\lambda_1   \int p_{\frac{l_1}{\lambda_1},s_1}(y)\dy,\\
&=& \hcrossent{p_{\frac{l_1}{\lambda_1},s_1}}{q_{\frac{l_2}{\lambda_1},\frac{s_2}{\lambda_1}}} +\log\lambda_1.
\end{eqnarray}
\end{proof}
From now on, we skip the other substitution proofs that are similar and state the identities:

\item[Left location translation.]
\begin{equation}
\hcrossent{p_{l_1+\alpha_1,s_1}}{q_{l_2,s_2}} =  \hcrossent{p_{l_1,s_1}}{q_{l_2-\alpha_1,s_2}},\quad \forall \alpha_1\in\bbR.
\end{equation}

\item[Right scale multiplication.]
\begin{equation}
\hcrossent{p_{l_1,s_1}}{q_{l_2,\lambda_2 s_2}} = \hcrossent{p_{\frac{l_1}{\lambda_2},\frac{s_1}{\lambda_2}}}{q_{\frac{l_2}{\lambda_2},s_2}} + \log\lambda_2,
\quad \forall \lambda_2\in\bbR_{++}.
\end{equation}

\item[Right location translation.]
\begin{equation}
\hcrossent{p_{l_1,s_1}}{q_{l_2+\alpha_2,s_2}} = \hcrossent{p_{l_1-\alpha_2,s_1}}{q_{l_2,s_2}}, \quad \forall \alpha_2\in\bbR.
\end{equation}

\end{description}

Furthermore, we get the following double-sided  scale multiplication identity by a change of variable (can also be obtained by applying the left scale multiplication with parameter $\lambda_1=\sqrt{\lambda}$ and then the right scale multiplication with parameter $\lambda_2=\sqrt{\lambda}$ :
\begin{equation}
\hcrossent{p_{l_1,\lambda s_1}}{q_{l_2,\lambda s_2}} = 
\hcrossent{p_{\frac{l_1}{\lambda},s_1}}{q_{\frac{l_2}{\lambda},s_2}} + \log \lambda,\quad \forall \lambda>0,
\end{equation}
and the generic cross-entropy rule by translations:
\begin{equation}
\hcrossent{p_{l_1+\alpha,s_1}}{q_{l_2+\beta,s_2}} = \hcrossent{p_{l_1,s_1}}{q_{l_2+\beta-\alpha,s_2}} = 
\hcrossent{p_{l_1+\alpha-\beta,s_1}}{q_{l_2,s_2}},\quad \forall \alpha,\beta\in\bbR.
\end{equation}

By using these ``parameter rewriting'' rules, we get the following properties:

\begin{property}[Location-scale entropy]\label{prop:ent}
We have 
\begin{equation}
h(p_{l,s})=h(p)+\log s.
\end{equation}
\end{property}
That is, the differential entropy of a density $p_{l,s}$ of a location-scale family is {\em independent of the location} and can be calculated from the entropy of the standard density $p$.

\begin{proof}
We have $h(p_{l,s})=\hcrossent{p_{l,s}}{p_{l,s}}=\hcrossent{p_{0,s}}{p_{0,s}}$ (using either the left/right translation rule) and
$\hcrossent{p_{0,s}}{p_{0,s}}=\hcrossent{p_{0,1}}{p_{0,1}}+\log s=h(p)+\log s$ (using either left/right multiplication rule).
\end{proof}

\begin{property}[Location-scale cross-entropy]\label{prop:cross}
We have 
\begin{eqnarray}
\hcrossent{p_{l_1,s_1}}{q_{l_2,s_2}} &=& \hcrossent{p_{\frac{l_1-l_2}{s_2},\frac{s_1}{s_2}}}{q}+\log s_2,\\
&=&
\hcrossent{p}{q_{\frac{l_2-l_1}{s_1},\frac{s_2}{s_1}}}+\log s_1.
\end{eqnarray}
\end{property}

That is, the cross-entropy between two location-scale densities can be reduced to the calculation of the cross-entropy between a standard density and a density of the other location-scale family.

\begin{proof}
We have $\hcrossent{p_{l_1,s_1}}{q_{l_2,s_2}}=\hcrossent{p_{\frac{l_1}{s_2},\frac{s_1}{s_2}}}{q_{\frac{l_2}{s_2},1}}+\log s_2$ (right multiplication rule) and $\hcrossent{p_{\frac{l_1}{s_2},\frac{s_1}{s_2}}}{q_{\frac{l_2}{s_2},1}}=\hcrossent{p_{\frac{l_1}{s_2}-\frac{l_2}{s_2},\frac{s_1}{s_2}}}{q_{0,1}}=\hcrossent{p_{\frac{l_1}{s_2}-\frac{l_2}{s_2},\frac{s_1}{s_2}}}{q}$ (right translation rule).
\end{proof}

\begin{property}[Location-scale Kullback-Leibler divergence]\label{prop:kl}
We have 
\begin{eqnarray}
\KL(p_{l_1,s_1}:q_{l_2,s_2})
&=&
\hcrossent{p}{q_{\frac{l_2-l_1}{s_1},\frac{s_2}{s_1}}}-h(p)=\KL\left(p:q_{\frac{l_2-l_1}{s_1},\frac{s_2}{s_1}}\right),\\
 &=& \hcrossent{p_{\frac{l_1-l_2}{s_1},\frac{s_1}{s_2}}}{q}-h(p)+\log \frac{s_2}{s_1}  = \KL(p_{\frac{l_1-l_2}{s_2},\frac{s_1}{s_2}}:q).
\end{eqnarray}
\end{property}

\begin{proof}
We have $\KL(p_{l_1,s_1}:q_{l_2,s_2})=\hcrossent{p_{l_1,s_1}}{q_{l_2,s_2}}-h(p_{l_1,s_1})$.
Then we apply Property~\ref{prop:cross}  
$\hcrossent{p_{l_1,s_1}}{q_{l_2,s_2}}=\hcrossent{p}{q_{\frac{l_2-l_1}{s_1},\frac{s_2}{s_1}}}+\log s_1$
and Propery~\ref{prop:ent}   $h(p_{l_1,s_1})=h(p)+\log s_1$ to get the result (the terms $\log s_1$  cancel out).
\end{proof}

Similarly, we have the following basic identities for the Kullback-Leibler divergence between any two location-scale densities:

\begin{eqnarray}
\KL(p_{l_1+l,s_1}:q_{l_2,\lambda s_2}) &=& \KL(p_{l_1,s_1}:q_{l_2-l,\lambda s_2}),\\
\KL(p_{l_1,\lambda  s_1}:q_{l_2,s_2}) &=& \KL(p_{\frac{l_1}{\lambda},{s_1}}:q_{\frac{l_2}{\lambda},\frac{s_2}{\lambda}}),\\
\KL(p_{l_1,s_1}:q_{l_2+l,s_2}) &=& \KL(p_{l_1-l,s_1}:q_{l_2,s_2}),\\
\KL(p_{l_1,s_1}:q_{l_2,\lambda s_2}) &=& \KL(p_{\frac{l_1}{\lambda},\frac{s_1}{\lambda}}:q_{\frac{l_2}{\lambda},s_2}).
\end{eqnarray}

We state the following theorem:

\begin{theorem}\label{thm:scaleinvariant}
The Kullback-Leibler divergence between two densities belonging to the same scale family is scale invariant:
$\KL(p_{\lambda s_1}:p_{\lambda s_2}) = \KL(p_{s_1}:p_{s_2})$ for any $\lambda>0$.
\end{theorem}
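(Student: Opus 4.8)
The plan is to show that both divergences collapse to the \emph{same} reduced divergence between the standard density $p$ and a scale-shifted copy of itself, so that the common factor $\lambda$ cancels inside the scale ratio. The crucial structural fact is that in a single scale family both arguments share the \emph{same} fixed location $l_0$ and the \emph{same} standard density $p$; hence when I invoke Property~\ref{prop:kl} the location-difference term vanishes and only the scale ratio survives.

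Concretely, I would first unfold the notation: $p_{\lambda s_i}=p_{l_0,\lambda s_i}$ and $p_{s_i}=p_{l_0,s_i}$, with $q\equiv p$ since there is a single scale family. Then I apply the first line of Property~\ref{prop:kl} to each side with $l_1=l_2=l_0$. For the scaled pair this gives
$$\KL(p_{l_0,\lambda s_1}:p_{l_0,\lambda s_2})=\KL\left(p:p_{\frac{l_0-l_0}{\lambda s_1},\frac{\lambda s_2}{\lambda s_1}}\right)=\KL\left(p:p_{0,\frac{s_2}{s_1}}\right),$$
where the location argument is $0$ because $l_0-l_0=0$, and the scale argument simplifies because $\frac{\lambda s_2}{\lambda s_1}=\frac{s_2}{s_1}$. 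Applying the identical reduction to the unscaled pair yields $\KL(p_{l_0,s_1}:p_{l_0,s_2})=\KL\left(p:p_{0,\frac{s_2}{s_1}}\right)$, which is the same expression.

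Equating the two right-hand sides gives the claim. The underlying reason, as anticipated in the abstract, is that the KL divergence between two scale densities is a function of the scale ratio $\frac{s_2}{s_1}$ alone, and this ratio is invariant under the common rescaling $s_i\mapsto\lambda s_i$. An equivalent route is to note that the KL divergence is invariant under the common invertible change of variable $y\mapsto\frac{y}{\lambda}$, which sends the pair $(p_{l_0,\lambda s_1},p_{l_0,\lambda s_2})$ to $(p_{\frac{l_0}{\lambda},s_1},p_{\frac{l_0}{\lambda},s_2})$; since these again share a common location they reduce to $\KL\left(p:p_{0,\frac{s_2}{s_1}}\right)$ by Property~\ref{prop:kl}.

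I do not anticipate a genuine obstacle here; the only point requiring care is to recognize that a scale family is governed by a \emph{single} standard density with a \emph{fixed} common location, so that setting $q=p$ and $l_1=l_2=l_0$ in Property~\ref{prop:kl} is legitimate and forces the location term to drop out.
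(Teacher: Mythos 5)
Your proof is correct and follows essentially the same route as the paper: the paper's proof is the one-line chain $\KL(p_{\lambda s_1}:p_{\lambda s_2})=\KL(p:p_{\frac{\lambda s_2}{\lambda s_1}})=\KL(p:p_{\frac{s_2}{s_1}})=\KL(p_{s_1}:p_{s_2})$, which is exactly your reduction via Property~\ref{prop:kl} with the common factor $\lambda$ cancelling in the scale ratio. Your version is merely more explicit about the fixed location $l_0$ dropping out, which the paper leaves implicit in its scale-family shorthand.
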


\begin{proof}
We have $\KL(p_{\lambda s_1}:p_{\lambda s_2})=\KL(p:p_{\frac{\lambda s_2}{\lambda s_1}})=\KL(p:p_{\frac{s_2}{s_1}})=\KL(p_{s_1}:p_{s_2})$.
\end{proof}

We can define a {\em scalar divergence} $D(s_1:s_2)\eqdef \KL(p_{s_1}:p_{s_2})$ that is scale-invariant: $D(\lambda s_1:\lambda s_2)=D(s_1:s_2)$ for any $\lambda>0$.
Another common example of scalar divergence is the Itakura-Saito divergence which belongs to the class of Bregman divergences~\cite{NMF-2009}.

The result presented for the KL divergence holds in the more general setting of Csisz\'ar's $f$-divergences~\cite{Csiszar-1963,TaylorFdiv-2013}: 
\begin{equation}
I_f(p:q)=\int_{\calX} p(x) f\left(\frac{q(x)}{p(x)}\right)\dx,
\end{equation}
for a positive convex function $f$, strictly convex at $1$, with $f(1)=0$.
The KL divergence is a $f$-divergence for the generator $f(u)=-\log u$.

\begin{theorem}
The $f$-divergence between two location-scale densities $p_{l_1,s_1}$ and $q_{l_2,s_2}$ can be reduced to the calculation of the $f$-divergence between one standard density with another location-scale density:
\begin{equation}
I_f(p_{l_1,s_1}:q_{l_2,s_2}) = I_f\left(p:q_{\frac{l_2-l_1}{s_1},\frac{s_2}{s_1}}\right) 
= I_f\left(p_{\frac{l_1-l_2}{s_2},\frac{s_1}{s_2}}:q\right).
\end{equation}
\end{theorem}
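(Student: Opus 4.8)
The plan is to establish both equalities by a single affine change of variable in the defining integral, exactly as in the four substitution identities used earlier, but now exploiting the fact that the generator $f$ acts on the density \emph{ratio} rather than on $\log q$ alone. First I would prove the left equality $I_f(p_{l_1,s_1}:q_{l_2,s_2}) = I_f(p:q_{\frac{l_2-l_1}{s_1},\frac{s_2}{s_1}})$ by substituting $y=\frac{x-l_1}{s_1}$ (so that $x=l_1+s_1 y$ and $\dx=s_1\,\dy$) into $I_f(p_{l_1,s_1}:q_{l_2,s_2})=\int_\calX p_{l_1,s_1}(x)\,f\!\left(q_{l_2,s_2}(x)/p_{l_1,s_1}(x)\right)\dx$. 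Under this substitution one has $p_{l_1,s_1}(x)=\frac{1}{s_1}p(y)$, and a short rewrite of the argument of $q$ (collecting $s_1 y+l_1-l_2$ over $s_2$) shows that $q_{l_2,s_2}(x)=\frac{1}{s_1}q_{l',s'}(y)$ with $l'=\frac{l_2-l_1}{s_1}$ and $s'=\frac{s_2}{s_1}$.

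The key step, and the reason the outcome is cleaner than for the cross-entropy, is that the common factor $\frac{1}{s_1}$ cancels inside the ratio: $q_{l_2,s_2}(x)/p_{l_1,s_1}(x)=q_{l',s'}(y)/p(y)$, so the argument of $f$ becomes scale-free. The remaining prefactor $\frac{1}{s_1}$ multiplying $p(y)$ is then absorbed by the Jacobian $\dx=s_1\,\dy$, leaving $\int_\calX p(y)\,f\!\left(q_{l',s'}(y)/p(y)\right)\dy=I_f(p:q_{l',s'})$ with no additive correction term. This is precisely where the $f$-divergence departs from Property~\ref{prop:cross}: there the logarithm inside the cross-entropy extracts $\log\frac{1}{s_1}=-\log s_1$ from the same Jacobian factor and turns it into the additive $\log s_1$ term, whereas here the ratio structure makes that factor disappear entirely.

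For the right equality I would repeat the argument with the substitution centered on the second density, $z=\frac{x-l_2}{s_2}$, which sends $q_{l_2,s_2}$ to the standard $q$ and, by the identical cancellation, sends $p_{l_1,s_1}$ to $p_{\frac{l_1-l_2}{s_2},\frac{s_1}{s_2}}$; equivalently, both equalities follow at once from the general invariance of $f$-divergences under a common invertible (here affine) change of variable, after which only the bookkeeping of the transformed location-scale parameters remains. I do not anticipate a genuine obstacle: the entire content is the Jacobian cancellation in the ratio, and the only care needed is to track the transformed pair $(l',s')$ correctly and to note that the affine maps used respect the shared support $\calX$.
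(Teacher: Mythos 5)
Your proposal is correct and follows essentially the same route as the paper: the substitution $y=\frac{x-l_1}{s_1}$, the cancellation of the common $\frac{1}{s_1}$ factor inside the ratio so that $f$ sees $q_{\frac{l_2-l_1}{s_1},\frac{s_2}{s_1}}(y)/p(y)$, and absorption of the Jacobian $s_1\,\dy$ by the prefactor, with the second equality obtained by the symmetric substitution $z=\frac{x-l_2}{s_2}$ (the paper notes this "similar" route, and alternatively invokes the adjoint generator $f^*(u)=uf(1/u)$). Your explicit contrast with the cross-entropy case, where the logarithm turns the Jacobian factor into an additive $\log s_1$ term, is a correct and clarifying observation, not a deviation.
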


\begin{proof}
The proofs follow from changes of the variable $x$ in the integral:
Consider $y=\frac{x-l_1}{s_1}$ with $\dx=s_1\dy$, $x=s_1y+l_1$ and $\frac{x-l_2}{s_2}=\frac{s_1y+l_1-l_2}{s_2}=\frac{y-\frac{l_2-l_1}{s_1}}{\frac{s_2}{s_1}}$:
\begin{eqnarray}
I_f(p_{l_1,s_1}:q_{l_2,s_2}) &:=& \int_{\calX} p_{l_1,s_1}(x) f\left(\frac{q_{l_2,s_2}(x)}{p_{l_1,s_1}(x)}  \right)\dx,\\
&=& \int_{\calY} \frac{1}{s_1} p(y) f\left( \frac{\frac{1}{s_2} q\left(\frac{y-\frac{l_2-l_1}{s_1}}{\frac{s_2}{s_1}}\right)}{\frac{1}{s_1}p(y)}
\right) s_1\dy,\\
&=& \int p(y) f\left(\frac{q_{\frac{l_2-l_1}{s_1},\frac{s_2}{s_1}}(y)}{p(y)}\right) \dy,\\
&=&  I_f\left(p:q_{\frac{l_2-l_1}{s_1},\frac{s_2}{s_1}}\right).
\end{eqnarray}
The proof for  $I_f(p_{l_1,s_1}:q_{l_2,s_2})=I_f(p_{\frac{l_1-l_2}{s_2},\frac{s_1}{s_2}}:q)$ is similar, or one can use the adjoint generator 
$f^*(u)=uf(\frac{1}{u})$ which yields the reverse $f$-divergence: $I_{f^*}(p:q)=I_f(q:p)$.
\end{proof}

Note that $f$-divergences are invariant under any diffeomorphism~\cite{IG-2016} $y=t(x)$ of the sample space $\calX$.
The $f$-divergences are called invariant divergences in information geometry~\cite{IG-2016}.
In particular, this invariance property includes the diffeomorphism defined by the group action of the location-scale group.

Thus  the $f$-divergences between scale densities amount to a scale-invariant scalar distance:
\begin{eqnarray}
D_f(s_1:s_2) := I_f(p_{s_1}:q_{s_2}) &=&   I_f\left(p:q_{\frac{s_2}{s_1}}\right) =: D_f\left(1:\frac{s_2}{s_1}\right),\\
 &=& I_f\left(p_{\frac{s_1}{s_2}}:q\right)=: 
D_f\left(\frac{s_1}{s_2}:1\right).
\end{eqnarray}

\section{The KL divergence between Cauchy location/scale distributions}

In this section, we consider a working example for the scale Cauchy family.
Surprisingly, the formula has not been widely reported in the literature (an erratum\footnote{see \url{https://sites.google.com/site/geotzag/publications}} corrects the formula given in~\cite{cauchy-2004}). 
Note that the Cauchy scale family can also be interpreted as a $q$-Gaussian family for $q=2$~\cite{IG-2016} and
a $\alpha$-stable family~\cite{cauchy-2004} for $\alpha=1$.

Consider the cross-entropy between two Cauchy distributions $p_1$ and $p_2$.
Using Property~\ref{prop:kl}, we can assume without loss of generality that the distribution $p_2$ is the standard Cauchy distribution $p$, and focus on calculating the following cross-entropy:
\begin{equation}%\label{eq:htimesls}
\hcrossent{p_{l,s}}{p} = -\int_{-\infty}^{\infty} p_{l,s}(x)\log p(x) \dx,
\end{equation}
with location $l=\frac{l_1-l_2}{s_1}$ and scale $s=\frac{s_1}{s_2}$, where
\begin{equation} 
p(x)= \frac{1}{\pi (1+x^2)},\quad p_{l,s}(x) = \frac{s}{\pi ( s^2+(x-l)^2)}.
\end{equation}

The scale Cauchy distributions form a subfamily with $l=0$.
We shall use the following result on definite integrals  (listed under the logarithmic forms of definite integrals in many handbooks of formulas and tables):\footnote{Also listed online at \url{https://en.wikipedia.org/wiki/List_of_definite_integrals}}
\begin{equation} 
A(a,b)=\int_{-\infty}^\infty \frac{\log (a^2+x^2)}{b^2+x^2}\dx = \frac{2\pi}{b}\log(a+b),\quad a,b>0.
\end{equation}

We get the cross-entropy between two scale Cauchy distributions $p_{s_1}$ and $p_{s_2}$ as follows:
\begin{eqnarray}
\hcrossent{p_{s_1}}{p_{s_2}} &=& \hcrossent{p_{s}}{p} +   \log s_2 , \\
&=&  \frac{s}{\pi} \int \frac{1}{s^2+x^2} \log  (1+x^2) \dx + \log \pi +\log s_2,\\
&=& \log \pi s_2 + \frac{s}{\pi} I(1,s),\\
&=& \log \pi\frac{(s_1+s_2)^2}{s_2}.
\end{eqnarray}

The differential entropy is obtained for $s_1=s_2=s$:
\begin{equation} 
h(p_{s})=\hcrossent{p_{s}}{p_{s}}=\log 4\pi s,
\end{equation} 
in accordance with~\cite{h-handbook-2013} (p. 68).
Thus the Kullback-Leibler between two scale Cauchy distributions is:

\begin{eqnarray}
\KL(p_{s_1}:p_{s_2}) = \hcrossent{p_{s_1}}{p_{s_2}}-h(p_{s_1}) &=& 2\log \left(\frac{s_1+s_2}{2\sqrt{s_1s_2}}\right),\label{eq:klcauchy}\\
&=&  2\log \left(\frac{1+\frac{s_2}{s_1}}{2\sqrt\frac{s_2}{s_1}}\right)
 = 2\log \left(\frac{1+\frac{s_1}{s_2}}{2\sqrt\frac{s_1}{s_2}}\right).
\end{eqnarray}
Notice that $A(s_1,s_2)=\frac{s_1+s_2}{2}$ is the arithmetic mean of the scales, and
$G(s_1,s_2)=\sqrt{s_1s_2}$ is the geometric mean of the scales.
Thus the KL divergence can be rewritten as $\KL(p_{s1}:p_{s_2})=2\log \frac{A(s_1,s_2)}{G(s_1,s_2)}$.
Since we have the arithmetic-geometric inequality $A\geq G$ (and $\frac{A}{G}\geq 1$), it follows that $\KL(p_{s_1}:p_{s_2})\geq 0$.

Let us notice that the KL divergence between two Cauchy scale distributions is symmetric: $\KL(p_{s_1}:p_{s_2})=\KL(p_{s_2}:p_{s_1})$.
For exponential families~\cite{EF-2009}, the KL divergence is provably symmetric only for the location (multivariate/elliptical) Gaussian family since the KL divergence amount to a Bregman divergence, and the only symmetric Bregman divergences are the squared Mahalanobis distances~\cite{BVD-2010}.
Not all scale families are symmetric:
For example, the Rayleigh distributions form a scale family (and also an exponential family~\cite{EF-2009}) but the KL divergence between two Rayleigh distributions amount to an Itakura-Saito divergence~\cite{EF-2009} that is asymmetric.

\begin{proposition}[KLD between scale Cauchy densities]
The differential entropy, cross-entropy and Kullback-Leibler divergence between two scale Cauchy densities $p_{s_1}$ and $p_{s_2}$ are:
\begin{eqnarray*}
h(p_{s}) &=& \log 4\pi s,\\
\hcrossent{p_{s_1}}{p_{s_2}} &=& \log \pi\frac{(s_1+s_2)^2}{s_2},\\
\KL(p_{s_1}:p_{s_2}) &=&  2\log \left(\frac{s_1+s_2}{2\sqrt{s_1s_2}}\right).
\end{eqnarray*}
\end{proposition}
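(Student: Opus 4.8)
The plan is to obtain all three quantities from a single cross-entropy computation, since the differential entropy is the self cross-entropy (a special case) and the KL divergence is the difference between cross-entropy and entropy. The central step is to reduce the cross-entropy between two scale Cauchy densities to a cross-entropy against the standard Cauchy density, and then to evaluate the resulting single integral using the closed form $A(a,b)=\int_{-\infty}^\infty \frac{\log(a^2+x^2)}{b^2+x^2}\dx=\frac{2\pi}{b}\log(a+b)$ quoted above.

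First I would apply Property~\ref{prop:cross} in its second (scale-to-standard) form with $l_1=l_2=0$ and both families equal to the Cauchy family, giving $\hcrossent{p_{s_1}}{p_{s_2}}=\hcrossent{p_s}{p}+\log s_2$ where $s=\frac{s_1}{s_2}$. Writing the standard density as $p(x)=\frac{1}{\pi(1+x^2)}$ so that $-\log p(x)=\log\pi+\log(1+x^2)$, and using $p_s(x)=\frac{s}{\pi(s^2+x^2)}$, the remaining integral is
\begin{equation}
\hcrossent{p_s}{p}=\log\pi+\frac{s}{\pi}\int_{-\infty}^\infty \frac{\log(1+x^2)}{s^2+x^2}\dx=\log\pi+\frac{s}{\pi}A(1,s)=\log\pi+2\log(1+s),
\end{equation}
where the last equality substitutes $A(1,s)=\frac{2\pi}{s}\log(1+s)$. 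Back-substituting $s=\frac{s_1}{s_2}$, so that $1+s=\frac{s_1+s_2}{s_2}$, and collecting the logarithms yields $\hcrossent{p_{s_1}}{p_{s_2}}=\log\frac{\pi(s_1+s_2)^2}{s_2}$, the claimed cross-entropy.

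The entropy then follows immediately by setting $s_1=s_2=s$, which collapses the formula to $h(p_s)=\log\frac{\pi(2s)^2}{s}=\log 4\pi s$. Finally, the KL divergence is $\KL(p_{s_1}:p_{s_2})=\hcrossent{p_{s_1}}{p_{s_2}}-h(p_{s_1})$; subtracting $\log 4\pi s_1$ from $\log\frac{\pi(s_1+s_2)^2}{s_2}$ and gathering the factor $4s_1s_2$ inside the logarithm gives $2\log\frac{s_1+s_2}{2\sqrt{s_1s_2}}$. The only genuine obstacle is the evaluation of the definite integral $A(1,s)$; once that handbook formula is taken as given, everything else is bookkeeping with the location-scale reduction rules and elementary logarithm manipulation. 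I would also verify convergence (the integrand decays like $\frac{\log x}{x^2}$ at infinity, hence is integrable), confirming that the cross-entropy is finite for all $s_1,s_2>0$.
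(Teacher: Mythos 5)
Your proof is correct and takes essentially the same approach as the paper: both reduce the cross-entropy to $\hcrossent{p_s}{p}+\log s_2$ with $s=s_1/s_2$, evaluate the remaining integral via the handbook formula $A(1,s)=\frac{2\pi}{s}\log(1+s)$, recover the entropy by setting $s_1=s_2$, and obtain the KL divergence by subtracting the entropy from the cross-entropy. The only cosmetic slip is that the identity you invoke is the first listed form of Property~\ref{prop:cross} (the one carrying $+\log s_2$), not the second, but the computation itself is exactly right.
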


\begin{corollary}
The Kullback-Leibler divergence between two Cauchy scale distributions is scale invariant.
\end{corollary}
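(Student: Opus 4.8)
The plan is to read the result off directly from the closed-form expression for the Kullback-Leibler divergence between scale Cauchy densities established immediately above. By the preceding proposition (equivalently, Eq.~\ref{eq:klcauchy}) we have
\begin{equation}
\KL(p_{s_1}:p_{s_2}) = 2\log\left(\frac{s_1+s_2}{2\sqrt{s_1 s_2}}\right).
\end{equation}
First I would substitute $s_1 \mapsto \lambda s_1$ and $s_2 \mapsto \lambda s_2$ for an arbitrary $\lambda > 0$ and simplify. The numerator scales as $\lambda s_1 + \lambda s_2 = \lambda(s_1+s_2)$, while the denominator scales as $2\sqrt{(\lambda s_1)(\lambda s_2)} = 2\lambda\sqrt{s_1 s_2}$, so the common factor $\lambda$ cancels inside the logarithm and the value is unchanged, giving $\KL(p_{\lambda s_1}:p_{\lambda s_2}) = \KL(p_{s_1}:p_{s_2})$.

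The only thing to verify is that this cancellation is exact, which is immediate: the argument of the logarithm is the ratio of the arithmetic mean $A(s_1,s_2)$ to the geometric mean $G(s_1,s_2)$, each of which is homogeneous of degree one in $(s_1,s_2)$, so the ratio is homogeneous of degree zero. There is no real obstacle here; the entire content is the observation that $\KL$ depends on the scales only through their ratio $\frac{s_2}{s_1}$, which is itself invariant under the common rescaling.

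Alternatively, I would derive the corollary without any computation by invoking Theorem~\ref{thm:scaleinvariant}. The scale Cauchy distributions form a scale family, namely the subfamily of the Cauchy location-scale family with $l=0$, so the general statement $\KL(p_{\lambda s_1}:p_{\lambda s_2}) = \KL(p_{s_1}:p_{s_2})$ applies verbatim. This route is arguably the cleaner of the two, since it makes transparent that scale invariance is a structural feature of every scale family rather than an accident of the particular Cauchy closed form; the explicit-formula route merely reconfirms it for the Cauchy case.
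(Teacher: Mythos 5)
Your proposal is correct and matches the paper's own proof, which likewise gives both routes: the direct cancellation argument via the degree-one homogeneity of the arithmetic and geometric means in the closed-form expression $2\log\frac{A(s_1,s_2)}{G(s_1,s_2)}$, and the appeal to Theorem~\ref{thm:scaleinvariant} for general scale families. The only difference is presentational: the paper cites the theorem first and then gives the homogeneity computation, whereas you lead with the computation.
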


\begin{proof}
Theorem~\ref{thm:scaleinvariant} already proves this property for any scale family including the Cauchy scale family.
However, here we shall directly use the property of homogeneous means.
Since for all $\lambda>0$, we have $A(\lambda s_1,\lambda s_2)=\lambda A(s_1,s_2)$ and $G(\lambda s_1,\lambda s_2)=\lambda G(s_1,s_2)$, it follows that $\frac{A(\lambda s_1,\lambda s_2)}{G(\lambda s_1,\lambda s_2)}=\frac{A(s_1,s_2)}{G(s_1,s_2)}$, and we have $\KL(p_{\lambda s_1}:p_{\lambda s_2})=\KL(p_{s_1}:p_{s_2})$.
\end{proof}

Let us mention the generic formula~\cite{KLCauchy-2019} for the Kullback-Leibler divergence between Cauchy location-scale density $p_{l_1,s_1}$ and $p_{l_2,s_2}$ is
\begin{equation}
\KL(p_{l_1,s_1}:p_{l_2,s_2})=\log\frac{(s_1+s_2)^2+(l_1-l_2)^2}{4s_1s_2}.
\end{equation}

Notice that the $f$-divergence is invariant by a diffeormorphism of the sample space:
For example, consider the log-normal scale family of location $l$ and scale $s$, and consider the mapping $x=\log z$.
Then we obtain a normal distribution of location $l$ and scale $s$.

%%%%%%%%
\section{Conditions on the standard density for symmetric KLDs}
%%%%%%%%

%%%
\subsection{The $f$-divergences between densities of a location family}
%%%
We first consider the case of location families.

\begin{theorem}
Let $\calL=\{p(x-l)\ :\ l\in\bbR\}$ denote a location family with even standard density $p(-x)=p(x)$ on the support $\bbR$.
Then all $f$-divergences between two densities $p_{l_1}$ and $p_{l_2}$ of $\calL$ are symmetric: 
$I_f[p_{l_1}:p_{l_2}]=I_f[p_{l_2}:p_{l_1}]$.
\end{theorem}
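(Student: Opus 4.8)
The plan is to first invoke the location-scale reduction theorem proved above to collapse the symmetry question into a single scalar identity, and then to exploit the evenness of $p$ through a reflection change of variable. Since $\calL$ has fixed scale $s_0$, so that $p_{l}=p_{l,s_0}$, the reduction $I_f(p_{l_1,s_0}:p_{l_2,s_0}) = I_f(p:p_{\frac{l_2-l_1}{s_0},1})$ applies with the two standard densities coinciding. Writing $\delta = \frac{l_2-l_1}{s_0}$ and $p_\delta(x)=p(x-\delta)$ for the shifted standard density, this yields $I_f(p_{l_1}:p_{l_2}) = I_f(p:p_\delta)$, while swapping the arguments gives $I_f(p_{l_2}:p_{l_1}) = I_f(p:p_{-\delta})$. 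The asserted symmetry is therefore \emph{equivalent} to the scalar identity $I_f(p:p_\delta) = I_f(p:p_{-\delta})$ for every $\delta\in\bbR$; that is, the map $\delta \mapsto I_f(p:p_\delta)$ must be shown to be even.

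Next I would establish this identity directly. Writing $I_f(p:p_\delta) = \int_{\bbR} p(x)\, f(\frac{p(x-\delta)}{p(x)})\,\dx$ and substituting $x\mapsto -x$ — a measure-preserving reflection of $\bbR$, whose Jacobian has absolute value one and which restores the orientation of the integration limits — the integrand becomes $p(-x)\, f(\frac{p(-x-\delta)}{p(-x)})$. Here the hypothesis $p(-x)=p(x)$ is used twice: on the reference weight, $p(-x)=p(x)$, and inside the ratio, $p(-x-\delta) = p(-(x+\delta)) = p(x+\delta)$. The integrand collapses to $p(x)\, f(\frac{p(x+\delta)}{p(x)})$, which is exactly the integrand of $I_f(p:p_{-\delta})$, and the identity follows.

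The only real obstacle is the bookkeeping: one must apply evenness simultaneously to the numerator, to the denominator, and to the dominating weight $p(x)\,\dx$, so that the single reflection sends $+\delta$ to $-\delta$ while leaving the generator $f$ and the base measure untouched. Conceptually, the same fact can be read off from the diffeomorphism invariance of $f$-divergences noted earlier in the paper: the reflection $x\mapsto -x$ pushes $p_l$ forward to $p_{-l}$ (again by evenness), so $I_f(p_{l_1}:p_{l_2}) = I_f(p_{-l_1}:p_{-l_2})$; combining this with the translation invariance $I_f(p_{l_1}:p_{l_2}) = \phi(l_2-l_1)$ of the location family forces $\phi(\delta)=\phi(-\delta)$, which is the claimed symmetry. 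I would present the elementary change-of-variable argument as the main proof and mention the invariance viewpoint as a remark.
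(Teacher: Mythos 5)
Your proposal is correct. The underlying mechanism is the same as the paper's --- a reflection change of variable that exploits the evenness of $p$ --- but you package it differently. The paper performs a single affine substitution $x \mapsto l_1+l_2-x$ directly in the integral $I_f[p_{l_1}:p_{l_2}]$ and applies evenness inside the generator's argument, which makes for heavier bookkeeping (indeed its displayed intermediate lines mix the variables $x$ and $y$). You instead factor the argument through the reduction theorem proved earlier in the paper: first collapse the two divergences to $I_f(p:p_\delta)$ and $I_f(p:p_{-\delta})$ with $\delta=(l_2-l_1)/s_0$, so that symmetry becomes the statement that the map $\delta\mapsto I_f(p:p_\delta)$ is even, and then prove that one-parameter identity with the pure reflection $x\mapsto -x$. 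Composing your translation, reflection, and translation recovers exactly the paper's affine reflection, so the two proofs are the same substitution written in different coordinates; what your decomposition buys is cleaner bookkeeping (each change of variable does one job) and a reusable intermediate statement, namely the evenness of the one-parameter divergence profile. Your closing remark that the result also follows from diffeomorphism invariance of $f$-divergences combined with translation invariance parallels an observation the paper itself makes right after its reduction theorem, and constitutes a legitimate second proof; presenting it as a remark, as you suggest, is the right choice.
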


\begin{proof}
Consider the change of variable $l_1-x=y-l_2$ (so that $x-l_2=l_1-y$) with $\dx=-\dy$ and let us use the property that $p(z-l_1)=p(l_1-z)$ since $p(z)$ is an even standard density.
We have:

\begin{eqnarray}
I_f[p_{l_1}:p_{l_2}] &:=& \int_{-\infty}^{+\infty} p(x-l_1) f\left(\frac{p(x-l_2)}{p(x-l_1)}\right) \dx,\\
&=& \int_{+\infty}^{-\infty} p(l_1-x) f\left(\frac{p(x-l_2)}{p(l_1-x)}\right)  (-\dy),\\
&=&  \int_{-\infty}^{+\infty} p(y-l_2) f\left(\frac{p(x-l_2)}{p(y-l_2)}\right)  \dy,\\
&=& \int_{-\infty}^{+\infty} p(y-l_2) f\left(\frac{p(l_1-y)}{p(y-l_2)}\right)  \dy,\\
&=& \int_{-\infty}^{+\infty} p(y-l_2) f\left(\frac{p(y-l_1)}{p(y-l_2)}\right)  \dy,\\
&=:& I_f[p_{l_2}:p_{l_1}].
\end{eqnarray}
\end{proof}

For example, the $f$-divergences between location Cauchy densities are symmetric since $p(x)=p(-x)$ for the standard Cauchy density.

\subsection{A general symmetric condition}

Let us study when the KL divergence between location-scale families is symmetric by characterizing the standard distribution: 
$\KL(p_{l_1,s_1}:p_{l_2,s_2}) = \KL(p_{l_2,s_1}:p_{l_1,s_1})$.
Since $\KL(p_{l_1,s_1}:p_{l_2,s_2})=\KL(p:p_{l,s})$ (with $s=\frac{s_2}{s_1}$ and $l=\frac{l_2-l_1}{s_1}$), we consider the case where

\begin{equation}
\KL(p:p_{l,s}) = \KL(p_{l,s}:p) = \KL\left(p:p_{\frac{1}{s},-\frac{l}{s}}\right).
\end{equation}

The equality $\KL(p:p_{l,s})=\KL(p:p_{\frac{1}{s},-\frac{l}{s}})$ yields the following equivalent condition:

\begin{equation}
\int_\calX p(x)\log\frac{p_{\frac{1}{s},-\frac{l}{s}}(x)}{p_{l,s}(x)}\dx =0,\quad \forall l\in\bbR,s>0.
\end{equation}

Assume a location family (i.e., $s=1$), then we have the condition
\begin{equation}
\int_\calX p(x)\log\frac{p_{-l}(x)}{p_{l}(x)}\dx =0,\quad \forall l\in\bbR,s>0.
\end{equation}
Since $p_{-l}(x)=p(x+l)$ and $p_{l}(x)=p(x-l)$, we end up with the condition
\begin{equation}\label{eq:klsymloc}
\int_\calX p(x)\log\frac{p(x+l)}{p(x-l)}\dx =0,\quad \forall l\in\bbR.
\end{equation}

For example, the location normal distribution has symmetric KL divergence because it satisfies Eq.~\ref{eq:klsymloc}.
Indeed, for normal location distributions, we have
$\int_\calX p(x)\log\frac{p_{-l}(x)}{p_{l}(x)}\dx=2l\int_\calX 2x p(x)=2l E[x]=0$ since $p(x)$ for the standard Gaussian density is an even function.

Consider now the scale family (with $l=0$), then we find the following condition

\begin{equation}\label{eq:klsymscale}
\int_\calX p(x)\log\frac{p(\frac{x}{s})}{p(sx)}\dx =2\log s,\quad \forall s\in\bbR_{++}.
\end{equation}

For example, the Cauchy scale distribution has symmetric KL divergence because the Cauchy standard distribution satisfies Eq.~\ref{eq:klsymscale}:

\begin{eqnarray}
\int_\calX p(x)\log\frac{p(\frac{x}{s})}{p(sx)}\dx &=& \frac{1}{\pi} \log \frac{\pi(1+s^2x^2)}{\pi 1+\frac{x^2}{s^2}},\\
&=& \frac{1}{\pi} \left( A(s,1)-A\left(\frac{1}{s},1\right)\right),\\
&=& 2 \log \frac{1+s}{1+\frac{1}{s}} = 2\log s.
\end{eqnarray}

Similarly, the $f$-divergence between two densities $p$ and $q$ is symmetric if and only if: 
\begin{equation}
\int_{\calX} \left( p(x)f\left(\frac{q(x)}{p(x)}\right) -q(x) f\left(\frac{p(x)}{q(x)}\right)\right) \dx=0.
\end{equation}

%%%%%%%%%%%%
\section{Kullback-Leibler minimizations between location-scale families}
%%%%%%%%%%%
Consider the density manifold~\cite{DensityMfd-1988} $M$ (Fr\'echet manifold), and two densities $p$ and $q$ of $M$.
We can generate the location-scale families/submanifolds 
$P=\{\frac{1}{s}p\left(\frac{x-l}{s}\right) \ :\ (l,s)\in \bbH\}$ and $Q=\{\frac{1}{s}q\left(\frac{x-l}{s}\right) \ :\ (l,s)\in \bbH\}$, where $\bbH=\bbR\times \bbR_{++}$ is the open half-space of 2D location-scale parameters.

Consider the following Kullback-Leibler minimization problem:
\begin{eqnarray}
\KL(p_{l_1,s_1}:Q) := \min_{(l_2,s_2) \in \bbH} &&\KL(p_{l_1,s_1}:q_{l_2,s_2})\\
\equiv \min_{(l_2,s_2) \in \bbH} &&\KL(p:q_{\frac{l_2-l_1}{s_1},\frac{s_2}{s_1}})\\
\equiv \min_{(l,s) \in \bbH} &&\KL(p:q_{l,s}) := \KL(p:Q),\\
\end{eqnarray}
with $l=\frac{l_2-l_1}{s_1}$ and $s=\frac{s_2}{s_1}$.
Once the best parameters $(l^*,s^*)$ have been calculated for a query density $p_{l_1,s_1}$, we get the minimizer on the other location-scale family as $l_2^*=s_1l^*+l_1$
and $s_2^*=s^*s_1$.

We have $\KL(p:q_{l,s})=h^\times(p:q_{l,s})-h(p)$, and therefore $\min_{(l,s) \in \bbH}\KL(p:q_{l,s})$ 
amount to  $\max_{(l,s) \in \bbH}  \int p(x)\log q_{l,s}(x)\dmu(x)$.

\begin{theorem}
The minimum KL divergence $\KL(p_{l_1,s_1}:q_{l_1^*,s_1^*})$ induced by the right-sided KL minimization of $p_{l_1,s_1}$ with $Q$  is independent of the location-scale query parameter $(l_1,s_1)$.
Similarly, the KL divergence $\KL(p_{l_2^*,s_2^*}:q_{l_2,s_2})$ induced by the left-sided KL minimization of $q_{l_2,s_2}$ with $P$  is independent of the location-scale query parameter $(l_2,s_2)$.
\end{theorem}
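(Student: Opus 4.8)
The plan is to leverage Property~\ref{prop:kl}, which already reduces any location-scale KL divergence to one against a standard density, together with the elementary fact that the reparametrization occurring there is a bijection of the parameter space $\bbH$ onto itself. For the right-sided problem, Property~\ref{prop:kl} gives
\[
\KL(p_{l_1,s_1}:q_{l_2,s_2}) = \KL\left(p:q_{\frac{l_2-l_1}{s_1},\frac{s_2}{s_1}}\right).
\]
Writing $l=\frac{l_2-l_1}{s_1}$ and $s=\frac{s_2}{s_1}$, the map $(l_2,s_2)\mapsto(l,s)$ is a bijection from $\bbH$ onto $\bbH$ for every fixed $(l_1,s_1)$, with inverse $l_2=s_1 l+l_1$ and $s_2=s_1 s$.

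First I would use this bijection to relabel the optimization variable, turning the minimization into
\[
\min_{(l_2,s_2)\in\bbH}\KL(p_{l_1,s_1}:q_{l_2,s_2}) = \min_{(l,s)\in\bbH}\KL(p:q_{l,s}) =: \KL(p:Q).
\]
The right-hand side depends only on the standard density $p$ of $P$ and on the family $Q$; the query parameters $(l_1,s_1)$ have been fully absorbed into the change of variables and no longer appear. This establishes that the minimum value is the constant $\KL(p:Q)$, independent of $(l_1,s_1)$, which is exactly the claim. If $(l^*,s^*)$ denotes a minimizer of $\KL(p:q_{l,s})$ over $\bbH$, then applying the inverse map recovers the minimizer $(l_2^*,s_2^*)=(s_1 l^*+l_1,\,s^* s_1)$ of the original problem, matching the relations stated just before the theorem.

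For the left-sided problem I would run the identical argument using the companion identity of Property~\ref{prop:kl},
\[
\KL(p_{l_1,s_1}:q_{l_2,s_2}) = \KL\left(p_{\frac{l_1-l_2}{s_2},\frac{s_1}{s_2}}:q\right),
\]
so that minimizing over $(l_1,s_1)$ with $q_{l_2,s_2}$ held fixed reduces, via the bijection $(l_1,s_1)\mapsto\bigl(\frac{l_1-l_2}{s_2},\frac{s_1}{s_2}\bigr)$ of $\bbH$, to $\min_{(l,s)\in\bbH}\KL(p_{l,s}:q)=:\KL(P:q)$, a quantity depending only on $P$ and the standard density $q$ of $Q$, hence independent of $(l_2,s_2)$.

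Because the whole content is the invertibility of an affine-in-$l$, multiplicative-in-$s$ relabeling, there is essentially no computation to grind through; the only point meriting care is the \emph{existence} of the minimizer. The theorem names the optimal density, so I would treat attainment of the infimum as a standing hypothesis (equivalently, one may replace $\min$ by $\inf$ throughout, in which case the infimal value is still exactly $\KL(p:Q)$ and the statement holds verbatim). Rather than an obstacle, the one conceptual step is recognizing that the reduced objective $\KL(p:q_{l,s})$ is a \emph{fixed} function on $\bbH$ into which the query parameters enter only as an invertible reparametrization of the optimization variable, so that the optimal value cannot depend on them.
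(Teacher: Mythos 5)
Your proof is correct and is essentially the paper's own argument: both reduce the minimization using Property~\ref{prop:kl} (equivalently the $f$-divergence reduction theorem) and exploit that the reparametrization is a bijection of $\bbH$, so the query parameters are absorbed into the optimization variable and the minimal value becomes a constant. The only cosmetic difference is that for the left-sided problem you standardize $q$ (obtaining $\KL(P:q)$) while the paper standardizes $p$ (obtaining $\KL(p:Q)$) --- these coincide --- and your explicit remarks on bijectivity and on attainment of the infimum make rigorous two points the paper leaves implicit.
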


Notice that in general $\KL(p_{l_1,s_1}:q_{l_1^*,s_1^*})\not = \KL(p_{l_2^*,s_2^*}:q_{l_2,s_2})$.
The theorem is a statement of the property mentioned without proof in~\cite{KL-locationscale-2016}.

The proof extends easily to $f$-divergences as follows:
\begin{theorem}
The $f$-divergence $I_f(p_{l_1,s_1}:q_{l_1^*,s_1^*})$ induced by the right-sided $f$-divergence minimization of $p_{l_1,s_1}$ with $Q$  is independent of $(l_1,s_1)$.
Similarly, the $f$-divergence $I_f(p_{l_2^*,s_2^*}:q_{l_2,s_2})$ induced by the left-sided  $f$-divergence minimization of $q_{l_2,s_2}$ with $P$ is independent of $(l_2,s_2)$.
\end{theorem}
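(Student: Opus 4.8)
The plan is to reduce both minimizations to minimizations of $f$-divergences against a fixed standard density, exactly mirroring the KL reasoning already recorded at the start of this section, and then to observe that the reduced problem no longer references the query parameters. For the right-sided case, I would start from the reduction theorem $I_f(p_{l_1,s_1}:q_{l_2,s_2}) = I_f\left(p:q_{\frac{l_2-l_1}{s_1},\frac{s_2}{s_1}}\right)$ and substitute it into the optimization $\min_{(l_2,s_2)\in\bbH} I_f(p_{l_1,s_1}:q_{l_2,s_2})$, obtaining $\min_{(l_2,s_2)\in\bbH} I_f\left(p:q_{\frac{l_2-l_1}{s_1},\frac{s_2}{s_1}}\right)$.

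The key step is then a change of the minimization variable. Setting $(l,s)=\left(\frac{l_2-l_1}{s_1},\frac{s_2}{s_1}\right)$, I would verify that for fixed $(l_1,s_1)$ this map is a bijection of $\bbH=\bbR\times\bbR_{++}$ onto itself: $l$ sweeps all of $\bbR$ as $l_2$ sweeps $\bbR$, and $s$ sweeps all of $\bbR_{++}$ as $s_2$ sweeps $\bbR_{++}$. In fact this reparametrization is precisely the left group action $(l_2,s_2)\mapsto e_1^{-1}.(l_2,s_2)$ by the inverse $e_1^{-1}=(l_1,s_1)^{-1}$ in the location-scale group $\bbH$, so bijectivity is immediate from the group axioms recalled in the introduction. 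Consequently the minimization equals $\min_{(l,s)\in\bbH} I_f(p:q_{l,s})$, whose value $I_f(p:Q)$ contains no reference to $(l_1,s_1)$, establishing independence of the query parameters.

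For the left-sided case I would proceed symmetrically, using the companion form $I_f(p_{l_1,s_1}:q_{l_2,s_2}) = I_f\left(p_{\frac{l_1-l_2}{s_2},\frac{s_1}{s_2}}:q\right)$ and minimizing over $(l_1,s_1)$. The map $(l_1,s_1)\mapsto\left(\frac{l_1-l_2}{s_2},\frac{s_1}{s_2}\right)=e_2^{-1}.(l_1,s_1)$ is again a self-bijection of $\bbH$ for fixed $(l_2,s_2)$, so the minimum collapses to $\min_{(l,s)\in\bbH} I_f(p_{l,s}:q)$, which is independent of $(l_2,s_2)$. Alternatively, one can derive the left-sided statement from the right-sided one by invoking the adjoint generator $f^*(u)=uf(1/u)$, for which $I_{f^*}(q:p)=I_f(p:q)$, thereby turning a left-sided minimization for $f$ into a right-sided one for $f^*$.

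The only genuinely delicate point — and the step I expect to require the most care — is the status of the minimizer. The argument as stated shows that the \emph{infimum} over $Q$ is independent of $(l_1,s_1)$; to legitimately speak of an attained minimizer $q_{l_1^*,s_1^*}$ and a value $I_f(p_{l_1,s_1}:q_{l_1^*,s_1^*})$, I would either assume the infimum is attained or note that, because the reparametrization is a bijection, an optimal pair $(l^*,s^*)$ for the standard problem transports back to an optimal $(l_2^*,s_2^*)=(s_1 l^*+l_1,\,s_1 s^*)$ for the original problem with equal divergence value. Modulo this attainment remark, the entire argument is the verbatim $f$-divergence analogue of the KL reasoning already given above.
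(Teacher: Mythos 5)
Your proof is correct and takes essentially the same route as the paper's: apply the reduction identity $I_f(p_{l_1,s_1}:q_{l_2,s_2})=I_f\left(p:q_{\frac{l_2-l_1}{s_1},\frac{s_2}{s_1}}\right)$ (resp.\ its companion form), reparametrize the minimization over $\bbH$ so the query parameters disappear, and transport the optimizer back via $(l_1^*,s_1^*)$ or $(l_2^*,s_2^*)$. Your explicit justification of bijectivity via the group action and your remark on infimum attainment are refinements the paper leaves implicit, but the argument is the same.
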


\begin{proof}
Without loss of generality, consider the left-sided $f$-divergence minimization problem (right-sided density query).
We have
\begin{equation}
I_f(P:q_{l_2,s_2}) := \min_{(l_1,s_1)\in \bbH} I_f(p_{l_1,s_1}:q_{l_2,s_2}) =
 \min_{(l_1,s_1)\in \bbH} I_f\left(p:q_{\frac{l_2-l_1}{s_1},\frac{s_2}{s_1}}\right).
\end{equation}
Let $l=\frac{l_2-l_1}{s_1}$ and $s=\frac{s_2}{s_1}$. Then the minimization problem becomes:
\begin{equation}
\min_{(l_1,s_1)\in \bbH} I_f(p_{l_1,s_1}:q_{l_2,s_2}) = \min_{(l,s)\in \bbH} I_f(p:q_{l,s}) := I_f(p:Q).
\end{equation}
Once the optimal parameter $l^*$ and $s^*$ have been calculated, we recover the density $p_{l_1^*,s_1^*}\in P$ that minimizes $I_f(p_{l_1,s_1}:q_{l_2,s_2})$ as $p_{l_1^*,s_1^*}$ with
\begin{eqnarray}
s_1^* &=& \frac{s_2}{s^*},\\
l_1^* &=& l_2-l^*s_1^*.
\end{eqnarray}
 
\end{proof}

Let us remark that these $f$-divergence minimization problems between a query density and a location-scale family can be interpreted as information projections~\cite{infoproj-2018} of a query density onto a location-scale manifold.

As a corollary, observe that 
\begin{eqnarray}
I_f(P:Q) &:=& \min_{(l_1,s_1)\in\bbH, (l_2,s_2)\in\bbH} I_f(p_{l_1,s_1}:q_{l_2,s_2}),\\
&=& \min\{ \min_{(l_2,s_2)\in\bbH} I_f(p:q_{l_2,s_2}), \min_{(l_1,s_1)\in\bbH} I_f(p_{l_1,s_1}:q) \}\\
&=& = \min\{I_f(P:q)= I_f(p:Q)\}.
\end{eqnarray}

% unique minium
% auto-parallel submanifolds?
Let us rework the example originally reported in~\cite{KL-locationscale-2016}:
Consider $p(x)=\sqrt{\frac{2}{\pi}}\exp(-\frac{x^2}{2})$ and $q(x)=\exp(-x)$ be the standard density of the half-normal distribution  and the standard density of the exponential distribution defined over the support $\calX=[0,\infty)$, respectively. We consider the scale families $P=\{p_{s_1}(x)=\frac{1}{s_1}p(\frac{x}{s_1}) \st s_1>0\}$ and $Q=\{q_{s_2}(x)=\frac{1}{s_2}q(\frac{x}{s_2}) \st s_2>0\}$.
Using symbolic computing detailed in Appendix~\ref{sec:maxima}, we find that
\begin{equation}\label{eq:hne}
\KL(p_{s_1}:q_{s_2}) = \frac{1}{2}\left(2\log\frac{s_2}{s_1} + \log\frac{2}{\pi} -1 \right) + \sqrt{\frac{2}{\pi}} \frac{s_1}{s_2}.
\end{equation}

Let $r=\frac{s_1}{s_2}$. Then $\KL(p_{s_1}:q_{s_2})=\sqrt{\frac{2}{\pi}}r-\log r+\log \sqrt{\frac{2}{\pi}}-\frac{1}{2}$.
That is, the KL between the scale families depends only on the scale ratio as proved earlier.

We KL divergence is minimized when $-\frac{1}{r}+\sqrt{\frac{2}{\pi}}=0$. That is, when $r=\sqrt{\frac{\pi}{2}}$.
We find that $\KL(p_{s_1}:Q)=\frac{1}{2}+\log \frac{2}{\pi}$ is independent of $s_1$, as expected.

%%%
\section{Concluding remarks}
%%%

The canonical structure of the densities of the location-scale families make it possible to get various identities for the cross-entropy, the differential entropy, and the Kullback-Leibler divergence, by making change of variables in the corresponding integrals. 
In particular, the Kullback-Leibler divergence (or more generally any $f$-divergence) between location-scale densities can be reduced to the calculation of the 
Kullback-Leibler divergence between one standard density with another transformed location-scale density.
It follows that the Kullback-Leibler divergence between scale densities depends only on the scale ratio.
We illustrated our approach by computing the Kullback-Leibler divergence between scale Cauchy distributions which is symmetric.
More generally, we reported a condition on the standard density of a location-scale family which yields symmetric Kullback-Leibler divergences. 
We proved that all $f$-divergences between two densities of a location family are symmetric provided that the standard density is an even function.
Finally, we proved that the minimum $f$-divergence between a query density of a location-scale family with any member of another location-scale family does not depend on the query location-scale parameters.
To conclude, let us mention that we can derive similarly identities for information-theoretic measures from change of variables in integrals for location-dispersion families~\cite{LD-2007}.

\appendix

\section{Symbolic calculation using {\sc Maxima}}\label{sec:maxima}

We use the computer algebra system {\sc Maxima}\footnote{http://maxima.sourceforge.net/} to calculate Eq.~\ref{eq:hne}:

\begin{verbatim}
pe(x) := sqrt(2/%pi)*exp(-x*x/(2.0));
qe(x) := exp(-x);
p(x,s1) := (1/s1)*pe(x/s1);
q(x,s2) := (1/s2)*qe(x/s2);
assume(s1>0);
assume(s2>0);
integrate(p(x,s1)*log(p(x,s1)/q(x,s2)),x,0,inf);
ratsimp(%);
\end{verbatim}


\begin{thebibliography}{10}

\bibitem{IG-2016}
S.~Amari.
\newblock {\em Information Geometry and Its Applications}.
\newblock Applied Mathematical Sciences. Springer Japan, 2016.

\bibitem{BVD-2010}
Jean-Daniel Boissonnat, Frank Nielsen, and Richard Nock.
\newblock Bregman {V}oronoi diagrams.
\newblock {\em Discrete \& Computational Geometry}, 44(2):281--307, 2010.

\bibitem{KLCauchy-2019}
Fr\'ed\'eric Chyzak and Frank Nielsen.
\newblock A closed-form formula for the {K}ullback-{L}eibler divergence between
  {C}auchy distributions.
\newblock {\em arXiv preprint arXiv:1905.10965}, 2019.

\bibitem{CT-2012}
Thomas~M Cover and Joy~A Thomas.
\newblock {\em Elements of information theory}.
\newblock John Wiley \& Sons, 2012.

\bibitem{Csiszar-1963}
Imre Csisz\'ar.
\newblock Eine informationstheoretische ungleichung und ihre anwendung auf den
  beweis der ergodizitat von markoffschen ketten.
\newblock {\em Magyar. Tud. Akad. Mat. Kutat\'o Int. K\"ozl}, 8:85--108, 1963.

\bibitem{NMF-2009}
C{\'e}dric F{\'e}votte, Nancy Bertin, and Jean-Louis Durrieu.
\newblock Nonnegative matrix factorization with the {I}takura-{S}aito
  divergence: With application to music analysis.
\newblock {\em Neural computation}, 21(3):793--830, 2009.

\bibitem{DensityMfd-1988}
John~D Lafferty.
\newblock The density manifold and configuration space quantization.
\newblock {\em Transactions of the American Mathematical Society},
  305(2):699--741, 1988.

\bibitem{h-handbook-2013}
Joseph~Victor Michalowicz, Jonathan~M Nichols, and Frank Bucholtz.
\newblock {\em Handbook of differential entropy}.
\newblock CRC Press, 2013.

\bibitem{Murray-1993}
M.~K. Murray and J.~W. Rice.
\newblock {\em Differential Geometry and Statistics}, volume~48.
\newblock CRC Press, 1993.

\bibitem{infoproj-2018}
Frank Nielsen.
\newblock What is an information projection?
\newblock {\em Notices of the AMS}, 65(3):321--324.

\bibitem{EF-2009}
Frank Nielsen and Vincent Garcia.
\newblock Statistical exponential families: A digest with flash cards.
\newblock {\em arXiv preprint arXiv:0911.4863}, 2009.

\bibitem{HcrossEF-2010}
Frank Nielsen and Richard Nock.
\newblock Entropies and cross-entropies of exponential families.
\newblock In {\em Image Processing (ICIP), 2010 17th IEEE International
  Conference on}, pages 3621--3624. IEEE, 2010.

\bibitem{TaylorFdiv-2013}
Frank Nielsen and Richard Nock.
\newblock On the chi square and higher-order chi distances for approximating
  $f$-divergences.
\newblock {\em IEEE Signal Processing Letters}, 21(1):10--13, 2013.

\bibitem{LD-2007}
Toshio Ohnishi and Takemi Yanagimoto.
\newblock Conjugate location-dispersion families.
\newblock {\em Journal of the Japan Statistical Society}, 37(2):307--325, 2007.

\bibitem{cauchy-2004}
George Tzagkarakis and Panagiotis Tsakalides.
\newblock A statistical approach to texture image retrieval via alpha-stable
  modeling of wavelet decompositions.
\newblock In {\em International Workshop on Image Analysis for Multimedia
  Interactive Services}, pages 21--23, 2004.

\bibitem{KL-locationscale-2016}
Cristiano {Villa}.
\newblock {A Property of the {K}ullback-{L}eibler Divergence for Location-scale
  Models}.
\newblock {\em ArXiv e-prints}, April 2016.

\end{thebibliography}
\end{document}